\documentclass[12pt]{article}
\usepackage{}

\usepackage{epsfig}
\usepackage{latexsym}
\usepackage{caption}
\usepackage{amsfonts}
\usepackage{amssymb}
\usepackage{mathrsfs}
\usepackage{amsmath}
\usepackage{enumerate}
\usepackage{graphics}
\usepackage{MnSymbol}
\usepackage{float}
\usepackage{pict2e}
\usepackage{subfigure}

\usepackage{color}

\setlength{\textheight}{8.5in} \setlength{\textwidth}{6.2in}
\setlength{\oddsidemargin}{0in} \setlength{\parindent}{1em}

\makeatletter

\newcommand{\Rmnum}[1]{\expandafter\@slowromancap\romannumeral #1@}

\makeatother

\begin{document}

\newtheorem{theorem}{Theorem}
\newtheorem{observation}[theorem]{Observation}
\newtheorem{corollary}[theorem]{Corollary}
\newtheorem{algorithm}[theorem]{Algorithm}
\newtheorem{definition}[theorem]{Definition}
\newtheorem{guess}[theorem]{Conjecture}
\newtheorem{claim}[theorem]{Claim}
\newtheorem{problem}[theorem]{Problem}
\newtheorem{question}[theorem]{Question}
\newtheorem{lemma}[theorem]{Lemma}
\newtheorem{proposition}[theorem]{Proposition}
\newtheorem{fact}[theorem]{Fact}

\makeatletter
  \newcommand\figcaption{\def\@captype{figure}\caption}
  \newcommand\tabcaption{\def\@captype{table}\caption}
\makeatother

\newtheorem{acknowledgement}[theorem]{Acknowledgement}

\newtheorem{axiom}[theorem]{Axiom}
\newtheorem{case}[theorem]{Case}
\newtheorem{conclusion}[theorem]{Conclusion}
\newtheorem{condition}[theorem]{Condition}
\newtheorem{conjecture}[theorem]{Conjecture}
\newtheorem{criterion}[theorem]{Criterion}
\newtheorem{example}[theorem]{Example}
\newtheorem{exercise}[theorem]{Exercise}
\newtheorem{notation}{Notation}
\newtheorem{solution}[theorem]{Solution}
\newtheorem{summary}[theorem]{Summary}

\newenvironment{proof}{\noindent {\bf
Proof.}}{\rule{3mm}{3mm}\par\medskip}
\newcommand{\remark}{\medskip\par\noindent {\bf Remark.~~}}
\newcommand{\pp}{{\it p.}}
\newcommand{\de}{\em}
\newcommand{\mad}{\rm mad}
\newcommand{\qf}{Q({\cal F},s)}
\newcommand{\qff}{Q({\cal F}',s)}
\newcommand{\qfff}{Q({\cal F}'',s)}
\newcommand{\f}{{\cal F}}
\newcommand{\ff}{{\cal F}'}
\newcommand{\fff}{{\cal F}''}
\newcommand{\fs}{{\cal F},s}
\newcommand{\s}{\mathcal{S}}
\newcommand{\G}{\Gamma}
\newcommand{\g}{(G_3, L_{f_3})}
\newcommand{\wrt}{with respect to }
\newcommand {\nk}{ Nim$_{\rm{k}} $  }

\newcommand{\q}{\uppercase\expandafter{\romannumeral1}}
\newcommand{\qq}{\uppercase\expandafter{\romannumeral2}}
\newcommand{\qqq}{\uppercase\expandafter{\romannumeral3}}
\newcommand{\qqqq}{\uppercase\expandafter{\romannumeral4}}
\newcommand{\qqqqq}{\uppercase\expandafter{\romannumeral5}}
\newcommand{\qqqqqq}{\uppercase\expandafter{\romannumeral6}}

\newcommand{\qed}{\hfill\rule{0.5em}{0.809em}}

\newcommand{\var}{\vartriangle}

\title{{\large \bf A note on two conjectures that strengthen the four colour theorem}}

\author{  Xuding Zhu\thanks{Department of Mathematics, Zhejiang Normal University,  China.  E-mail: xudingzhu@gmail.com. Grant Number: NSFC 11571319.},}

\maketitle

\begin{abstract}
	
	There are two conjectures concerning planar graph colourings that are strengthenings of the four colour theorem. One concerns signed graph colouring and is proposed by     M\'{a}\v{c}ajov\'{a},   Raspaud and \v{S}koviera. It asserts that every signed planar graph is $4$-colourable. Another concerns list colouring and is proposed by 
 K\"{u}ndgen and Ramamurthi which asserts that if $L$ is a $2$-list assignment of a planar graph $G$, then there is an $L$-colouring   of $G$ such that each colour class induces a bipartite graph. In this note we prove that the first conjecture implies the second one. 
 
\noindent {\bf Keywords:}
planar graph;  signed graph colouring; list colouring.

\end{abstract}


 The four colour problem is  perhaps the most influential problem  in graph theory. The statement that every planar graph is $4$-colourable remained a  conjecture for over a century before it was confirmed by Appel and Haken \cite{AH} in 1977 by  a computer assisted  proof. Later, a simpler  but still computer assisted proof based on the same general approach was given by Robertson, Sanders, Seymour and Thomas \cite{RST}. The study of the four colour problem generated many powerful tools in graph theory, and also motivated many related theory and challenging problems. In this note, we explore a relation between two conjectures that are strengthenings of the four colour theorem.

 One conjecture concerns colouring of signed graphs. Assume $G$ is a graph. A {\em signature of $G$} is a mapping $\sigma: E(G) \to \{1, -1\}$. A \emph{  signed graph} is a pair $(G, \sigma)$, where $G$ is a graph and $\sigma $ is a signature of $G$.

In the
1980's,
Zaslavsky studied vertex colouring of signed graphs
\cite{Z}. He defined a  colouring of a signed graph $(G, \sigma)$ as a mapping $f: V(G) \to \{\pm k, \pm(k-1) \ldots, \pm 1, 0\}$ such that for any edge $e=xy$ of $G$, $f(x) \ne \sigma_e f(y)$. 
In 2016, M\'{a}\v{c}ajov\'{a},   Raspaud and \v{S}koviera \cite{MRS} defined the chromatic number of a signed graph as follows:   
\begin{definition}
	\label{def-signgcol}
	Assume $(G,\sigma)$ is a signed graph and $k$ is a positive integer.
	If $k=2q$ is even (respectively, $k=2q+1$ is odd), then a $k$-colouring of    $(G,\sigma)$ is a  mapping $f: V(G) \to \{\pm q, \pm(q-1) \ldots, \pm 1 \}$ (respecitively, $f: V(G) \to \{\pm q, \pm(q-1) \ldots, \pm 1,0 \}$ )  such that for any edge $e=xy$ of $G$, $f(x) \ne \sigma_e f(y)$.   The {\em chromatic number} $\chi(G, \sigma)$ of $(G, \sigma)$ is the minimum $k$ such that $(G, \sigma)$ has a  $k$-colouring.
\end{definition}

A signed planar graph is a signed graph $(G, \sigma)$ such that $G$ is a planar graph. As a generalization of the four colour theorem,   M\'{a}\v{c}ajov\'{a},   Raspaud and \v{S}koviera proposed the following conjecture in \cite{MRS}:
 
 \begin{conjecture}
 	\label{conj1}
 	Every signed planar graph is $4$-colourable.
 \end{conjecture}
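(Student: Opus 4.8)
The statement to be proved is Conjecture~\ref{conj1}, which is a genuine strengthening of the four colour theorem rather than a lemma admitting a short self-contained argument, and any honest plan must begin by confronting this. With the colour set $\{-2,-1,1,2\}$ prescribed by Definition~\ref{def-signgcol} for $k=4$, a positive edge $e=xy$ forbids $f(x)=f(y)$ while a negative edge forbids $f(x)=-f(y)$. In particular, when $\sigma\equiv 1$ the constraint reduces to $f(x)\ne f(y)$ on every edge, so a $4$-colouring of $(G,\sigma)$ is exactly a proper $4$-colouring of $G$; the all-positive case is therefore equivalent to the four colour theorem itself. Consequently no purely elementary proof is possible: the argument must at least contain (or invoke) the four colour theorem, and the real content lies entirely in accommodating the extra constraints created by the negative edges.

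The first reduction I would make is \emph{switching invariance}. For $U\subseteq V(G)$ define the switched signature $\sigma^U$ by $\sigma^U_e=-\sigma_e$ precisely when $e$ has exactly one endpoint in $U$, and $\sigma^U_e=\sigma_e$ otherwise. Since the colour set $\{\pm q,\dots,\pm 1\}$ (and $\{\pm q,\dots,\pm 1,0\}$) is closed under $c\mapsto -c$, negating the colours on $U$ converts any $k$-colouring $f$ of $(G,\sigma)$ into a $k$-colouring of $(G,\sigma^U)$: for $e=xy$ with $x\in U,\ y\notin U$ the requirement $f(x)\ne\sigma_e f(y)$ becomes $-f(x)\ne -\sigma_e f(y)$, which is exactly the $(G,\sigma^U)$-constraint. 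Hence $\chi(G,\sigma)=\chi(G,\sigma^U)$, and it suffices to prove the conjecture for one representative of each switching class, for example normalizing so that a fixed spanning tree is all-positive. This standard step genuinely tidies the combinatorics, but it does not by itself reduce the problem to the ordinary four colour theorem.

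The substantive plan would then mirror the Appel--Haken and Robertson--Sanders--Seymour--Thomas program: take a minimal counterexample $(G,\sigma)$ (minimizing $|V(G)|$, and among these the number of negative edges after switching), argue that $G$ admits no small separating cycle and no low-degree vertex that could be deleted and recoloured, and finally exhibit an unavoidable set of configurations each of which is reducible \emph{in the signed sense}, now tested against the four colours $\{-2,-1,1,2\}$ together with the involution $c\mapsto -c$ and the negative-edge constraints. The main obstacle, and the reason this remains a conjecture rather than a theorem, is precisely this last step: the discharging rules and the catalogue of reducible configurations from the four colour theorem do not transfer, because negative edges destroy the symmetry that makes Kempe-chain recolouring effective, and a configuration reducible for proper colouring need not remain reducible once incident edges are negative. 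I would in fact expect this direct attack to fail outright: the conjecture as stated has since been shown to be \emph{false} (Kardo\v{s} and Narboni constructed a signed planar graph that is not $4$-colourable). The correct reading of this note is therefore that its value lies not in proving Conjecture~\ref{conj1} but in establishing its implication for the list-colouring conjecture of K\"{u}ndgen and Ramamurthi.
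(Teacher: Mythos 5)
You have correctly identified the essential point: the statement you were given is Conjecture~\ref{conj1} itself, which the paper does not prove and does not claim to prove --- its only theorem is Theorem~\ref{main}, the implication from Conjecture~\ref{conj1} to Conjecture~\ref{conj2}. So there is no proof in the paper to compare against, and refusing to manufacture one was the right call. Your supporting observations are all sound: the all-positive signature case is exactly ordinary proper $4$-colouring, so any proof must subsume the four colour theorem; your switching argument is correct (negating $f$ on $U$ turns the constraint $f(x)\ne\sigma_e f(y)$ into the $\sigma^U$-constraint, since the colour set $\{\pm 2,\pm 1\}$ is closed under negation, so $\chi(G,\sigma)$ is a switching invariant); and your historical remark is accurate --- Kardo\v{s} and Narboni have since constructed a signed planar graph with no $4$-colouring in the sense of Definition~\ref{def-signgcol}, so the conjecture is in fact false and no proof attempt could succeed. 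The one thing worth adding is that the refutation of Conjecture~\ref{conj1} does not invalidate Theorem~\ref{main} (an implication with a false hypothesis remains true), nor does it settle Conjecture~\ref{conj2}, which to date remains open; your closing assessment that the note's value lies in the implication, not in the truth of its hypothesis, is exactly right.
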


Another conjecture concerns list colouring of planar graphs. Thomassen proved that every planar graph is $5$-choosable \cite{Tho}. However, as shown by Voigt  \cite{Voigt},    there are planar graphs that are not $4$-choosable. Nevertheless, there is an interesting list version of the four colour theorem, which is a 
 conjecture proposed by  K\"{u}ndgen and Ramamurthi \cite{KR2002}:
 
\begin{conjecture}
	\label{conj2}
	Assume $G$ is a planar graph and $L$ is a $2$-list assignment of $G$. Then there is an $L$-colouring $\phi$ of $G$ such that each colour class    induces a bipartite graph. 
\end{conjecture}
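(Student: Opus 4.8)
The plan is to derive Conjecture 2 from Conjecture 1 by encoding the desired $L$-colouring of a planar graph $G$ (with $2$-list assignment $L$) as a signed $4$-colouring of a signed graph built on the \emph{same} graph $G$; since the underlying graph is unchanged, it is planar, and Conjecture 1 applies verbatim. The first step is to reformulate the conclusion of Conjecture 2. Requesting an $L$-colouring $\phi$ whose colour classes each induce a bipartite graph is the same as requesting a pair of maps $\phi, s$, where $\phi(v)\in L(v)$ and $s(v)\in\{1,2\}$ is a ``side'', subject to the single rule that on every edge $xy$ with $\phi(x)=\phi(y)$ one has $s(x)\ne s(y)$: the map $s$ restricted to a colour class is then a proper $2$-colouring of it, which is exactly bipartiteness. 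I shall call a config on an edge $xy$ a \emph{conflict} if $\phi(x)=\phi(y)$ and $s(x)=s(y)$, so the goal is a conflict-free choice of $(\phi,s)$.

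The key idea is to encode this pair into the four signed colours $\{+1,-1,+2,-2\}$ of a $4$-colouring by letting the \emph{sign} record the colour choice and the \emph{magnitude} record the side. I fix for each vertex an ordering $L(v)=\{a_v,b_v\}$ (say by the natural order of the colours) and, given $f:V(G)\to\{\pm1,\pm2\}$, decode $\phi(v)=a_v$ if $f(v)>0$, $\phi(v)=b_v$ if $f(v)<0$, and $s(v)=|f(v)|$. Any such $f$ decodes to a genuine $L$-colouring automatically. It remains to choose a signature $\sigma$ so that a proper signed colouring never yields a conflict. The dictionary here is that a positive edge forbids $f(x)=f(y)$, i.e. ``same sign and same magnitude'', whereas a negative edge forbids $f(x)=-f(y)$, i.e. ``opposite sign and same magnitude''.

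Accordingly, for an edge $xy$ I would set $\sigma_{xy}=+1$ when a common colour of $L(x)$ and $L(y)$ sits in the \emph{same} position of the two orderings, and $\sigma_{xy}=-1$ when it sits in \emph{opposite} positions (either value if the lists are disjoint). A short case check over the possible overlaps of $L(x)$ and $L(y)$ (two common colours, exactly one, or none) then shows that with this choice every conflict on $xy$ is among the pairs $(f(x),f(y))$ forbidden by $\sigma_{xy}$. Consequently $(G,\sigma)$ is a signed planar graph; Conjecture 1 supplies a $4$-colouring $f$, and since $f$ is proper it avoids all forbidden pairs, hence all conflicts, so its decoding $(\phi,s)$ is the required $L$-colouring with bipartite classes.

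The point I expect to be the crux is getting the encoding the right way round. It is tempting to let the \emph{magnitude} record the colour choice, but then two adjacent vertices would have to index any shared colour by the same magnitude, and this is globally obstructed: the planar triangle with lists $\{1,2\},\{2,3\},\{3,1\}$ forces an odd cycle of ``same-magnitude'' constraints, so no consistent choice exists, and a single signed edge can never forbid a ``different-magnitude'' conflict. Recording the colour choice by the \emph{sign} instead lets the edge sign absorb such an inconsistency locally, which is precisely what allows a plain per-edge signature on $G$ itself to work with no gadgets or auxiliary vertices. The only remaining care is in verifying the overlap case analysis of the dictionary and in noting that the resulting ``over-forbidding'' on edges with one or no common colours is harmless: Conjecture 1 still guarantees a proper colouring of the (unchanged, hence planar) signed graph, and \emph{any} proper colouring decodes to a valid $L$-colouring.
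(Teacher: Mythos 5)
Your proposal is correct and is essentially the paper's own proof: you encode the choice of list colour in the sign of $f$ and the bipartition side in the magnitude $|f|$, and you set $\sigma_{xy}=-1$ exactly when a shared colour occupies opposite positions in the two ordered lists (i.e.\ $\min L(x)=\max L(y)$ or $\max L(x)=\min L(y)$), which is precisely the signature the paper uses; your opposite convention (positive sign selecting $\min$ rather than $\max$) is immaterial by the symmetry $f\mapsto -f$. The only difference is presentational---the paper writes out the positive/negative edge case check explicitly, while you sketch it, and your remark on why the ``magnitude records colour'' encoding fails is a nice addition not found in the paper.
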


  Conjecture \ref{conj2} is equivalent to say that   if $L$ is a $4$-list assignment of a planar graph $G$ in which colours come in pairs, and for any vertex $v$, a colour occurs in $L(v)$ if its twin occurs in $L(v)$, then $G$ is $L$-colourable. Thus Conjecture \ref{conj2} is also a strengthening  of the four colour theorem.
  
In this note, we prove that Conjecture \ref{conj1} implies Conjecture \ref{conj2}.

\begin{theorem}
	\label{main}
	Assume $G$ is a planar graph. If for any signature $\sigma $ of $G$, the signed graph  $(G, \sigma)$ is $4$-colourable, then for any $2$-list assignment $L$ of $G$, there is an $L$-colouring of $G$ so that each colour class induces a bipartite graph. 
\end{theorem}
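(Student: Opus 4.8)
The plan is to encode the given $2$-list assignment $L$ as a signature $\sigma$ of $G$, apply the hypothesis to obtain a signed $4$-colouring $f\colon V(G)\to\{-2,-1,1,2\}$ of $(G,\sigma)$, and then read the desired colouring off from $f$. The guiding idea is to use the \emph{sign} of $f(v)$ to choose which of the two colours of $L(v)$ the vertex $v$ receives, and to use the \emph{absolute value} $|f(v)|\in\{1,2\}$ as the witness of bipartiteness inside each colour class. Concretely, I fix for each vertex an arbitrary labelling $L(v)=\{a_v,b_v\}$ and decode $f$ by setting $\phi(v)=a_v$ when $f(v)\in\{1,2\}$ and $\phi(v)=b_v$ when $f(v)\in\{-1,-2\}$. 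With this decoding $\phi$ is automatically an $L$-colouring, so the whole task reduces to choosing $\sigma$ so that, in every colour class $\phi^{-1}(t)$, adjacent vertices receive different values of $|f|$.

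To choose $\sigma$, I look at each edge $uv$. If $L(u)\cap L(v)=\emptyset$ then $\phi(u)\ne\phi(v)$ regardless of $f$, so the edge can never be monochromatic and I may set $\sigma_{uv}=1$. If $u$ and $v$ share a colour $t$, I set $\sigma_{uv}=1$ exactly when $t$ plays the same role at both ends (that is, $t=a_u\Leftrightarrow t=a_v$), and $\sigma_{uv}=-1$ otherwise. The one thing to verify here is that this is well defined when $L(u)=L(v)$, the only situation in which two colours are shared: a short check on the two labellings shows that both shared colours dictate the same sign, so no conflict arises.

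Finally I apply the hypothesis to $(G,\sigma)$ to obtain $f$, and verify the key claim that $|f(u)|\ne|f(v)|$ on every monochromatic edge $uv$, say with $\phi(u)=\phi(v)=t$. By the decoding, the signs of $f(u)$ and $f(v)$ agree precisely when $t$ has the same role at both ends, i.e.\ precisely when $\sigma_{uv}=1$; in that case the colouring condition $f(u)\ne f(v)$ together with equal signs forces $\{|f(u)|,|f(v)|\}=\{1,2\}$. When $\sigma_{uv}=-1$ the two signs are opposite, and the condition $f(u)\ne -f(v)$ again forces $|f(u)|\ne|f(v)|$. Hence $v\mapsto|f(v)|$ properly $2$-colours each class $\phi^{-1}(t)$, so every colour class induces a bipartite graph. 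I expect the main obstacle to be getting this correspondence the right way round: the signed-colouring conditions constrain a pair $f(u),f(v)$ only when $|f(u)|=|f(v)|$, so the more obvious attempt---using $|f(v)|$ to name the colour of $v$---breaks down exactly on edges whose shared colour is forced onto different absolute values, where the signature has no grip. Placing the colour in the sign and the bipartition in the absolute value is what lets the signature enforce bipartiteness.
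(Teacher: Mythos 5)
Your proposal is correct and follows essentially the same approach as the paper: encode the list choice in the sign of the signed colouring, use $|f(v)|$ to two-colour each colour class, and define the signature to be negative exactly on edges where a shared colour plays opposite roles at the two ends. The only cosmetic difference is that the paper fixes the canonical labelling $a_v=\max L(v)$, $b_v=\min L(v)$ via a linear order on the colours, which makes your well-definedness check for the case $L(u)=L(v)$ automatic.
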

\begin{proof}
	Assume  $G$ is a planar graph such that for any signature $\sigma $ of $G$, the signed graph   $(G, \sigma)$ is $4$-colourable. Let $L$ be a $2$-list assignment of $G$. We assume the colours are linearly ordered, say for each vertex $v$ of $G$, $L(v)$ is a set of two positive integers.  We denote by $\min L(v)$ and $\max L(v)$ the minimum and the maximum colour in $L(v)$, respectively.
	We define a signature $\sigma$ of $G$ as follows:
	
	For $e=uv \in E(G)$, let 
	\[
	\sigma(e) =	\begin{cases}
	-1, & \text{ if $\min L(u) = \max L(v)$ or $\min L(v) = \max L(u)$}, \cr
	1, &\text{ otherwise.}
	\end{cases}
	\]
	
	Let $f: V(G) \to \{\pm 1, \pm 2\}$ be a $4$-colouring of $(G, \sigma)$. We define an $L$-colouring $\phi$ of $G$ as follows:
	
		For $v \in V(G)$, let 
		\[
		\phi(v) =	\begin{cases}
		\max L(v), & \text{ if $f(v) \in \{1,2\}$}, \cr
		\min L(v), &\text{ if $f(v) \in \{-1, -2\}$.}
		\end{cases}
		\]
		
		Now we show that for any colour $i$, $\phi^{-1}(i)$ induces a bipartite graph. Let $X_i = \{v \in V(G): \phi(v)=i\}$.
		Let $\psi: X_i \to \{1,2\}$ be defined as 
		$\psi(v) = |f(v)|$. It suffices to show that $\psi$ is a proper colouring of $G[X_i]$. 
		
		Assume $e=uv$ is a positive edge. Then either $i = \min L(u)  = \min L(v)$ or $i = \max L(u)  = \max L(v)$. In the former case, 
		  $f(u), f(v) \in \{-1,-2\}$ and in the latter case,
		  $f(u), f(v) \in \{1,2\}$. Since $e$ is a positive edge, we have $f(u) \ne f(v)$. Therefore $|f(u)| \ne |f(v)|$, i.e., $\psi(u) \ne \psi(v)$.
		  
		Assume $e=uv$ is a negative edge. Then either $i = \min L(u)  = \max L(v)$ or $i = \max L(u)  = \min L(v)$. In the former case, 
		$f(u)  \in \{-1,-2\}$ and $f(v)  \in \{1,2\}$, in the latter case,
		$f(u)  \in \{1,2\}$ and $f(v)  \in \{-1,-2\}$. Since $e$ is a negative edge, we have $f(u) \ne -f(v)$. Hence again $|f(u)| \ne |f(v)|$, i.e., $\psi(u) \ne \psi(v)$.
\end{proof}


\begin{thebibliography}{99}\setlength{\itemsep}{-0.001mm}

	\bibitem{AH} 
	K. Appel and W. Haken, \emph{ Every planar map is four-colorable},
	Illinois J. Math.
	21
	(1977),
	429--567.
	
	\bibitem{KR2002} A. K\"{u}ndgen, R. Ramamurthi, {\em Coloring face-hypergraphs of graphs on surfaces}, J. Combin. Theory Ser. B 85 (2002), 307–337.

	\bibitem{MRS} E. M\'{a}\v{c}ajov\'{a}, A. Raspaud, M. \v{S}koviera,  \emph{ The chromatic number of a signed graph},
	Electron. J. Combin. 23 (1) (2016) \#P1.14.
	
	\bibitem{RST} N. Robertson, D. Sanders, P. Seymour and R. Thomas, \emph{ A new proof of the four colour theorem},   Electron. Res. Announc. Amer. Math. Soc. 2 (1996), no. 1, 17--25. 
	 
	 \bibitem{Tho} C. Thomassen, \emph{  Every planar graph is 5-choosable}, J. Combin. Theory Ser. B 62 (1994) 180--181.
	 
 \bibitem{Voigt} M. Voigt, \emph{  List colourings af planar graphs}, Discrete Math., 120 (1993), 215--910.
 
 \bibitem{Z} T. Zaslavsky, \emph{ Signed graph coloring}, Discrete Math. 39 (1982) 215--228
 

	
\end{thebibliography}
\end{document}